\newtheorem{theorem}{Theorem}[section]
\newtheorem{lemma}[theorem]{Lemma}
\newtheorem{proposition}[theorem]{Proposition}
\newtheorem{corollary}[theorem]{Corollary}
\theoremstyle{definition}
\newtheorem{definition}[theorem]{Definition}
\theoremstyle{remark}
\newtheorem{remark}[theorem]{Remark}
\begin{document}

\setcounter{page}{1}

\title[semi-uo-convergent in ordered vector spaces]{semi unbounded order convergent in ordered vector spaces}
\author[M. Ebrahimzadeh]{Masoumeh Ebrahimzadeh}
\address[M. Ebrahimzadeh]{Islamic Azad University, Department of  Mathematics, Sarab Branch, Sarab, Iran}
\email{ebrahimzadeh.math@yahoo.com}

\author[K. Haghnejad Azar]{Kazem Haghnejad Azar$^{*}$}
\address[K. Haghnejad Azar]{Department  of  Mathematics  and  Applications, Faculty of  Sciences, University of Mohaghegh Ardabili, Ardabil, Iran.}
\email{haghnejad@uma.ac.ir}
\subjclass[2010]{Primary 47B65; Secondary 46B40, 46B42.}
\keywords{ ordered vector space, pre-Riesz space, unbounded order convergent, semi unbounded order convergent.
\newline \indent $^{*}$Corresponding author}
\begin{abstract}
	Let $X$ be an ordered vector space. The net $\{x_\alpha\}\subseteq X$ is semi unbounded order convergent to $x$ (in symbol $x_\alpha\xrightarrow{suo}x$), if  there is a net $\{y_\beta\}$, possibly over a different index set, such that $y_\beta \downarrow 0$ and for every $\beta$ there exists $\alpha_0$ such that
	$\{\{\pm(x_\alpha - x)\}^u,y\}^l\subseteq \{y_\beta\}^l$, whenever $\alpha \geq \alpha_0$ and for all $0\leq y \in X$.
In vector lattice $E$,  semi unbounded order convergence is equivalent with unbounded order convergence.	
	We study some properties of this convergence and some of its relationships with others known order convergence.  
\end{abstract}
\maketitle

\section{Introduction}
First  we will ours motivate  to write this article. By studying of \cite{m}, we became interested in working on ordered vector spaces. In this article, we have discussed convergence in ordered vector spaces that similar to the unbounded order convergent that has been done in many articles, including \cite{4} and \cite{4b}. 
 
Here we bring some definitions of need.
Let $X$ be a real vector space and $K$ be a cone in $X$, that is, $K$ is a wedge ($x,y \in K$ and $\lambda,\mu \geq 0$ imply $\lambda x + \mu y \in K$) and $K \cap (-K) = \{0\}$. In $X$ a partial order is defined by $x\leq y $ whenever $y-x \in K$. The space $(X, K)$ (or, loosely $X$) is then called a (partially) ordered vector space.
 A subspace $M \subseteq X$ is majorizing in $X$ if for every $x\in X$ there exists some $m\in M$ with $x\leq m$ (or, equivalently, if for each $x\in X$ there exists some $m\in M$ with $m\leq x$).
 A subspace $M\subseteq X$ is called directed if for every $x,y \in M$ there is an element $z\in M$ such that $x\leq z$ and $y\leq z$. An ordered vector space $X$ is directed if and only if $X_+ $ is generating in $X$, that is, $X = X_+ - X_+$.
An ordered vector space $X$ is called Archimedean if for every $x,y \in X$ with $nx\leq y$ for every $n\in \mathbb{N}$ one has $x\leq 0$. 
The ordered vector space $X$ has the Riesz decomposition property (RDP) if for every $x_1,  x_2,  z \in K$ with $z\leq x_1 + x_2$ there exist $z_1,  z_2 \in K$ such that $ z = z_1 +z_2$ with $z_1 \leq x_1$ and $z_2 \leq x_2$. We call a linear subspace $M$ of an ordered vector space $X$ order dense in $X$ if for every $x\in X$ we have 
\begin{equation*}
x = \inf \{z\in M : x \leq z\},
\end{equation*}
that is, the greatest lower bound of set $\{z\in M : x \leq z\}$ exists in $X$ and equals to $x$, see page $360$ of \cite{1a}. Clearly, if $M$ is order dense in $X$, then $M$ is majorizing in $X$.
 Denote for a subset $M$ of $X$, the set of all upper bounds (resp, down bounds) by $M^u = \{x\in X : x\geq m \ for \ all\ m \in M\}$ (resp, $M^l = \{x\in X : x\leq m \ for \ all\ m \in M\}$).
  It is clear that for every subset $A,B$ of $X$, $(A^l + B^l) \subseteq (A+B)^l$. Moreover, if $X$ has Riesz decomposition property (in short RNP)  and $A,B \subseteq K$, then $(A+B)^l \subseteq A^l + B^l$.\\
   The elements $x,y \in X$ are called disjoint, in symbols $x \perp y$, if $\{\pm (x+y)\}^u = \{\pm (x-y)\}^u$. The disjoint complement of a subset $M\subseteq X$ is $M^d = \{x\in X \mid \forall y\in M: x \perp y\}$. A sequence $\{x_n\}\subseteq X$ is said to be disjoint, if for every $n \neq m$, $x_n \perp x_m$. 
    A linear subspace $B$ of an ordered vector space $X$ is called a band in $X$ if $B = B^{dd} $. A subset $M$ of an ordered vector space $X$ is called solid if for every $x\in X$ and $y\in M$ the relation $\{\pm y\}^u \subseteq \{\pm x\}^u$ implies $x\in M$. A solid subspace $M$ of $X$ is called an ideal.\\
 Recall that a linear map $i : X \rightarrow Y$ is said to be bipositive if for every $x\in X$ one has $i(x)\geq 0$ if and only if $x\geq 0$. A partially ordered vector space $(X,K)$ is called pre-Riesz space if for every $x,y,z \in X$ the inclusion $\{x+y,x+z\}^u \subseteq \{y,z\}^u$ implies $x\in K$. Clearly, each vector lattice is a pre-Riesz space, since the inclusion in definition of pre-Riesz space reduces to inequality $(x+y)\vee (x+z)\geq y \vee z$, so $ x+(y \vee z) \geq y \vee z$, which implies $x\geq 0$. By Theorem 4.3 of \cite{3},  partially ordered vector space $X$ is a pre-Riesz space if and only if there exist a vector lattice $Y$ and a bipositive linear map $i: X \rightarrow Y$ such that $i(X)$ is order dense in $Y$. The pair $(Y,i)$ (or, loosely $Y$) is then called a vector lattice cover of $X$. The theory of pre-Riesz spaces and their vector lattice covers is due to van Haandel, see \cite{5}.
A net $\{x_\alpha\} \subseteq X$ is said to be decreasing (in symbols, $x_\alpha \downarrow$), whenever $\alpha \geq \beta $ implies $x_\alpha \leq x_\beta$. For $x\in X$ the notation $x_\alpha \downarrow x $ means that $x_\alpha \downarrow $ and $\inf_\alpha \{x_\alpha\} = x$ both hold. The meanings of $x_\alpha \uparrow x$ are analogous.  We say that a net $\{x_\alpha\}\subseteq X$ $(o)$-converges (resp, $\tilde{o}$-converges) to $x\in X$ (in symbols, $x_\alpha \xrightarrow{o}x$, resp, $x_\alpha\xrightarrow{\tilde{o}}x$), if there is a net $\{y_\alpha\}\subseteq X$ (resp, $\{y_\beta\}$ possibly over a different index set) such that $y_\alpha \downarrow 0$ (resp, $y_\beta\downarrow 0$) and for all $\alpha$ (resp, for every $\beta$ there exists $\alpha_0$ that for all $\alpha \geq \alpha_0$ ) one has $ \pm(x_\alpha - x) \leq y_\alpha$, (resp, $ \pm(x_\alpha - x) \leq y_\beta$). 
 For two elements $y,z \in K$ with $ y \leq z$ denote the according order interval by $ [y,z] = \{x\in X: y\leq x \leq z\}$. A set $M\subset X$ is called order bounded if there are $y,z \in X$ such that $M\subseteq [y,z]$.  Let $X$ and $Z$ be ordered vector spaces. 
 
Recall that the net $\{x_\alpha\}$ in vector lattice $E$ is said to be unbounded order convergent to $x$ (or, $uo$-convergent for short) to $x$ if  there is a net $\{y_\beta\}$, possibly over a different index set, such that $y_\beta \downarrow 0$ and for every $\beta$ there exists $\alpha_0$ such that
$\mid x_\alpha - x\mid \wedge y \leq y_\beta$, whenever $\alpha \geq \alpha_0$ and for all $0\leq y \in E$. In this case, we write $x_\alpha\xrightarrow{uo}x$. 
\section{semi unbounded order convergence on ordered vector spaces}
\begin{definition}\label{ds}	Let $X$ be an ordered vector space. The net $\{x_\alpha\}\subseteq X$ is semi unbounded order convergent (or, $semi$-$uo$-convergent for short) to $x$ if  there is a net $\{y_\beta\}$, possibly over a different index set, such that $y_\beta \downarrow 0$ and for every $\beta$ there exists $\alpha_0$ such that
	 $\{\{\pm(x_\alpha - x)\}^u,y\}^l\subseteq \{y_\beta\}^l$, whenever $\alpha \geq \alpha_0$ and for all $0\leq y \in X$. In this case, we write $x_\alpha\xrightarrow{suo}x$. 
\end{definition}
\begin{remark}
${\tilde{o}}$-convergence implies ${suo}$-convergence, but the convergence, in general, not holds.
	Let $\{x_\alpha\}\subseteq X$ and $x_\alpha\xrightarrow{\tilde{o}}x$. Therefore, there exists a net $\{y_\beta\}\subseteq X$ and $y_\beta \downarrow 0$ and for all $\beta$, there is an $\alpha_0$ that for each $\alpha$, $\pm (x_\alpha -x) \leq y_\beta$. Let $z \in  \{\{\pm(x_\alpha - x)\}^u,y\}^l$ for all $0\leq y \in X$. Since for $\alpha \geq \alpha_0$, $\pm(x_\alpha - x)\leq y_\beta$, hence $z\leq y_\beta$. So  $\{\{\pm(x_\alpha - x)\}^u,y\}^l\subseteq \{y_\beta\}^l$. Therefore, $x_\alpha\xrightarrow{suo}x$.
	
On the other hand, the standard basis of $c_{0}$, $\{ e_n\}_{n=1}^{\infty}$ is ${suo}$-convergence, but not ${\tilde{o}}$-convergence.
	 	 Note that for order bounded nets, $semi$-$uo$-convergence is equivalent to $\tilde{o}$-convergence. 
\end{remark} 
\begin{proposition}\label{dd}
If $E$ is a vector lattice, then the net $\{x_\alpha\}\subseteq E$ is $uo$-convergent to $x$ iff is $semi$-$uo$-convergent to $x$.
\end{proposition}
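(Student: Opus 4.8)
The plan is to show that, over a vector lattice, the $semi$-$uo$ condition reduces \emph{literally} to the ordinary $uo$ condition, witnessed by the \emph{same} net $\{y_\beta\}$; the equivalence in both directions then follows at once, since the two definitions have the identical quantifier pattern over $\beta$ and $\alpha_0$ and the identical hypothesis $y_\beta\downarrow 0$.

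First I would collect the elementary lattice identities needed. For $a\in E$ one has $\{\pm a\}^u=\{|a|\}^u$, because $z\ge a$ and $z\ge -a$ together are equivalent to $z\ge a\vee(-a)=|a|$; consequently $(\{\pm a\}^u)^l=(\{|a|\}^u)^l=\{|a|\}^l$, since the lower bounds of the up-set of a single element $w$ are exactly the elements $\le w$. Also $(A\cup B)^l=A^l\cap B^l$ for arbitrary subsets $A,B$, and $\{u\}^l\cap\{v\}^l=\{u\wedge v\}^l$ for $u,v\in E$. Finally, $\{u\}^l\subseteq\{v\}^l$ holds if and only if $u\le v$ (forward: $u\in\{u\}^l$; converse: transitivity). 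Reading $\{\{\pm(x_\alpha-x)\}^u,y\}$ as $\{\pm(x_\alpha-x)\}^u\cup\{y\}$ and putting $a=x_\alpha-x$, these give
\[
\{\{\pm(x_\alpha-x)\}^u,y\}^l=(\{\pm(x_\alpha-x)\}^u)^l\cap\{y\}^l=\{|x_\alpha-x|\}^l\cap\{y\}^l=\{\,|x_\alpha-x|\wedge y\,\}^l
\]
for every $0\le y\in E$.

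With this computation in hand, the $semi$-$uo$ requirement ``$\{\{\pm(x_\alpha-x)\}^u,y\}^l\subseteq\{y_\beta\}^l$ for all $0\le y$'' becomes ``$\{\,|x_\alpha-x|\wedge y\,\}^l\subseteq\{y_\beta\}^l$ for all $0\le y$'', which by the inclusion-versus-inequality criterion is exactly ``$|x_\alpha-x|\wedge y\le y_\beta$ for all $0\le y$'' --- the defining inequality of $uo$-convergence. Hence a net $\{y_\beta\}$ with $y_\beta\downarrow 0$ witnesses $x_\alpha\xrightarrow{suo}x$ if and only if it witnesses $x_\alpha\xrightarrow{uo}x$, proving both implications simultaneously. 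I do not expect a genuine obstacle here; the one step that truly uses the vector-lattice hypothesis --- and hence the one to state carefully --- is $(\{\pm a\}^u)^l=\{|a|\}^l$, which relies on the existence of $|a|=a\vee(-a)$, something not available for a general element of a pre-Riesz space.
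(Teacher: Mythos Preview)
Your proof is correct and rests on the same underlying observation as the paper's: in a vector lattice the set $\{\{\pm(x_\alpha-x)\}^u,y\}^l$ has $|x_\alpha-x|\wedge y$ as its greatest element, so membership in $\{y_\beta\}^l$ is equivalent to the single inequality $|x_\alpha-x|\wedge y\le y_\beta$. The paper argues the two directions separately (showing any $z$ in the set satisfies $z\le |x_\alpha-x|\wedge y$, then conversely that $|x_\alpha-x|\wedge y$ lies in the set), whereas you package both into the identity $\{\{\pm(x_\alpha-x)\}^u,y\}^l=\{|x_\alpha-x|\wedge y\}^l$ and invoke $\{u\}^l\subseteq\{v\}^l\iff u\le v$; this is a cleaner organization of the same argument rather than a different route.
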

\begin{proof}
 Let $\{x_\alpha\}\subseteq E$ and $x_\alpha\xrightarrow{uo}x$. There exists a net $\{y_\beta\}\subseteq E$ that $y_\beta\downarrow 0$ and for each $\beta$ there exists an $\alpha_0$ that for each $\alpha\geq \alpha_0$ we have $|x_\alpha - x| \wedge y \leq y_\beta$ for all $y\in E^+$. Let $z\in \{\{\pm(x_\alpha - x)\}^u,y\}^l$. It is clear that $z\leq |x_\alpha - x|\wedge y$ and therefore for each $\alpha\geq \alpha_0$, $z\leq y_\beta$. Therefore, $\{\{\pm(x_\alpha - x)\}^u,y\}^l\subseteq \{y_\beta\}^l$. It means that $x_\alpha\xrightarrow{suo}x$. 

For conversely, let  $x_\alpha\xrightarrow{suo}x$. It is clear that $(|x_\alpha - x| \wedge y) \in \{\{\pm(x_\alpha - x)\}^u,y\}^l$ for all $y\in E^+$. 
  Therefore, for each $\beta$ there exists $\alpha_0$ such that for each $\alpha \geq \alpha_0$, $(|x_\alpha - x| \wedge y )\in \{y_\beta\}^l$. Hence
   $|x_\alpha - x| \wedge y \leq y_\beta$ for all $y\in E^+$ whenever $\alpha \geq \alpha_0$. It means that $x_\alpha\xrightarrow{uo}x$.
\end{proof}
\begin{lemma}\label{eli}
	Let $X$ be an ordered vector space and $\{x_\alpha\}\subseteq X$, then
\end{lemma}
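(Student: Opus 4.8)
I read this lemma as collecting the elementary stability properties of semi-uo-convergence: invariance under shifting the limit, the fact that the stronger order-type convergences already introduced imply it, compatibility with scalar multiples, and stability under passing to subnets. The plan is to prove each clause by returning to Definition \ref{ds} and producing an explicit decreasing net $\{y_\beta\}\downarrow 0$ witnessing the conclusion, so that every claim reduces to a routine inclusion between sets of upper and lower bounds.

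The clauses that require essentially no work are the following. For the equivalence $x_\alpha\xrightarrow{suo}x \iff x_\alpha-x\xrightarrow{suo}0$, the single observation needed is $\{\pm((x_\alpha-x)-0)\}^u=\{\pm(x_\alpha-x)\}^u$, so the two defining inclusions are literally the same and the same witness serves both. That $o$-convergence implies $\tilde o$-convergence is immediate, since a common index set is a special case of a cofinal selection, and the Remark preceding this lemma already records $\tilde o\Rightarrow suo$; chaining these gives the implication from $o$-convergence. For subnets, if $\{x_{\alpha_\gamma}\}_{\gamma\in\Gamma}$ is a subnet with cofinal monotone index map $\gamma\mapsto\alpha_\gamma$, I keep the same witness $\{y_\beta\}$: given $\beta$, pick $\alpha_0$ as in Definition \ref{ds} and then $\gamma_0$ with $\alpha_{\gamma_0}\ge\alpha_0$; for $\gamma\ge\gamma_0$ one has $\alpha_\gamma\ge\alpha_0$, hence $\{\{\pm(x_{\alpha_\gamma}-x)\}^u,y\}^l\subseteq\{y_\beta\}^l$ for every $0\le y\in X$.

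The clause on scalar multiples needs a short computation. For $\lambda>0$ I would take $\{\lambda y_\beta\}$ as the new witness; it still decreases to $0$ because scaling by a positive number preserves both the order and infima. Using $\{\pm\lambda(x_\alpha-x)\}^u=\lambda\{\pm(x_\alpha-x)\}^u$, any common lower bound $z$ of $\{\pm\lambda(x_\alpha-x)\}^u$ and a given $y\in X_+$ gives a common lower bound $\lambda^{-1}z$ of $\{\pm(x_\alpha-x)\}^u$ and $\lambda^{-1}y$, and since $\lambda^{-1}y$ again runs over all of $X_+$ the hypothesis yields $\lambda^{-1}z\le y_\beta$, i.e. $z\le\lambda y_\beta$. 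The case $\lambda<0$ follows from the positive case together with $\{\pm(-(x_\alpha-x))\}^u=\{\pm(x_\alpha-x)\}^u$, and $\lambda=0$ is trivial.

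The step I expect to be the genuine obstacle is any clause involving linear combinations of semi-uo-convergent nets or a squeeze property: unlike the above, such a statement forces one to fuse two witnessing nets and to handle lower bounds of a \emph{sum} of upper-bound sets, where in a general ordered vector space one only has $A^l+B^l\subseteq(A+B)^l$ and the reverse inclusion requires the Riesz decomposition property. One convenient point is that each set $\{\pm u\}^u$ is automatically contained in the cone, so RDP is applicable once it has been assumed; still, the real care lies in tracking exactly which clauses hold in an arbitrary ordered vector space and which need RDP, the pre-Riesz property, or the Archimedean property (the latter being what one would invoke for uniqueness of the semi-uo-limit). The individual bound estimates themselves are mechanical once the correct witnessing net has been guessed.
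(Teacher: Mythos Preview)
Your proposal partly misidentifies the clauses. The lemma does \emph{not} contain an ``$o\Rightarrow\tilde o\Rightarrow suo$'' item; that implication is the content of the Remark preceding the lemma, so the paragraph you devote to it is addressing the wrong statement. More importantly, you omit clauses (2) and (3): if $x_\alpha\le y$ for all $\alpha$ and $x_\alpha\xrightarrow{suo}x$ then $x\le y$, and if $0\le x_\alpha\xrightarrow{suo}x$ then $0\le x$. The paper handles (2) by observing that an upper bound makes the net order bounded, whence $suo$ reduces to $\tilde o$ and a cited lemma applies; for (3) the paper argues directly that $-x$ lies below every element of $\{x_\alpha-x\}^u$ (since $x_\alpha\ge0$), hence below every element of $\{\pm(x_\alpha-x)\}^u$, and below any $y\ge0$ too, so $-x\in\{\{\pm(x_\alpha-x)\}^u,y\}^l\subseteq\{y_\beta\}^l$ forces $-x\le y_\beta\downarrow0$. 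Your sketch has no analogue of either argument.

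Your guess that uniqueness should come from the Archimedean property is also off: in the paper, uniqueness (clause (6)) is deduced from the squeeze clause (5) by applying $x_\alpha\le x_\alpha$ twice, and (5) in turn is proved using RDP. So the hypothesis you need for uniqueness is RDP, not Archimedean. On the clauses you do identify correctly---the shift equivalence (1), scalar multiples (part of (4)), additivity requiring RDP (the other part of (4)), and subnets (7)---your arguments match the paper's approach; your subnet argument via a cofinal monotone index map is in fact more carefully stated than the paper's.
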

\begin{enumerate}
	\item $x_\alpha \xrightarrow{suo}x$ iff $(x_\alpha - x) \xrightarrow{suo}0$.
	\item if for each $\alpha$, $ x_\alpha \leq y$ and $x_\alpha \xrightarrow{suo}x$, then $ x \leq y$.
		\item if $0\leq x_\alpha \xrightarrow{suo}x$, then $0 \leq x$. 

	Moreover, if $X$ has the $RDP$ property, then 
		\item if $x_\alpha \xrightarrow{suo}x$ and $y_\alpha \xrightarrow{suo}y$, then $ \lambda x_\alpha + \mu y_\alpha \xrightarrow{suo} \lambda x + \mu y$ for each scalar $ \lambda,  \mu  \in\mathbb{R}$.
	\item if $x_\alpha \xrightarrow{suo}x$, $z_\alpha \xrightarrow{suo}z$ and $ x_\alpha \leq z_\alpha$ for all $\alpha$, then $ x \leq z$.\label{pp}
	\item if $x_\alpha\xrightarrow{suo}x$ and $x_\alpha \xrightarrow{suo}y$, then $x = y$.
	\item if  $x_\alpha \xrightarrow{suo}0$, then for each subnet $\{z_\gamma\}$ of $\{x_\alpha\}$, $z_\gamma\xrightarrow{suo}0$.
\end{enumerate}
\begin{proof}
	\begin{enumerate}
	\item By Definition \ref{ds}, it is established.
		\item By assumption, it is clear that $\{x_\alpha\}$ is order bounded. Therefore, $x_\alpha\xrightarrow{\tilde{o}}x$. So by Lemma 2.1 of \cite{m}, the proof is clear.
	\item	Let $0\leq x_\alpha \xrightarrow{suo}x$, therefore there exists net $\{y_\beta\}\subseteq X$ such that $y_\beta \downarrow 0$ and for every $\beta $ there exists $\alpha_0$ such that $\{  \{\pm (x_\alpha - x)\}^u, y\}^l \subseteq \{y_\beta\}^l$ whenever $ \alpha \geq \alpha_0$ and for all $0\leq y\in X$. We know that $\{x_\alpha - x\}\subseteq \{\pm(x_\alpha - x)\}$ for all $\alpha$. Therefore, it is clear that for all $\alpha$, $\{\pm(x_\alpha - x)\}^u \subseteq \{x_\alpha - x\}^u$. So we have
	 $\{\{x_\alpha - x\}^u,  y\}^l \subseteq \{\{\pm(x_\alpha -x)\}^u,  y\}^l$ for all $\alpha$. Therefore, for $\alpha \geq \alpha_0$,  $\{\{x_\alpha - x\}^u,  y\}^l\subseteq \{y_\beta\}^l$ for all $0\leq y\in X$. It is clear that $-x\in  \{\{x_\alpha - x\}^u,  y\}^l$. Hence $-x\leq y_\beta$. So $x\geq 0$.
		\item Let $\lambda > 0$ and $x_\alpha\xrightarrow{suo}x$. We have $\{ \{\pm(\lambda x_\alpha -\lambda x)\}^u, y\}^l = \lambda(\{  \{\pm(x_\alpha - x)\}^u, y^\prime\}^l)\subseteq \{\lambda y_\beta\}^l$. Note that if $\lambda < 0$, then $\lambda (\pm x_\alpha) = -\lambda (\pm x_\alpha)$. 
		
		We have $\{\{\pm (x_\alpha - x)\}^u + \{\pm(y_\alpha-y)\}^u\}\subseteq \{\pm(x_\alpha -x +y_\alpha -y)\}^u$. Therefore, $\{\{\pm(x_\alpha -x +y_\alpha -y)\}^u, y \}^l \subseteq \{ \{\{\pm (x_\alpha - x)\}^u + \{\pm(y_\alpha-y)\}^u\},  y\}^l$ for all $0\leq y\in X$.
		 Since $X$ has the $RDP$ property, $\{\{\pm(x_\alpha -x +y_\alpha -y)\}^u, y \}^l\subseteq  \{ \{\{\pm (x_\alpha - x)\}^u,  y\}^l + \{\{\pm(y_\alpha-y)\}^u\},  y\}^l$. Hence $x_\alpha + y_\alpha \xrightarrow{suo}x+y$.
\item For all $y\in X^+$ we have $\{\{x-z\}^u,  y\} =\{ \{x-x_\alpha +x_\alpha - z\}^u,  y\}$. It is clear that $\{ \{x-x_\alpha +z_\alpha - z\}^u,  y\}\subseteq \{ \{x-x_\alpha +x_\alpha - z\}^u,  y\}$ and therefore $\{ \{x-x_\alpha +x_\alpha - z\}^u,  y\}^l \subseteq \{ \{x-x_\alpha +z_\alpha - z\}^u,  y\}^l$. Since $X$  has the $RDP$ property, the rest of the proof is clear.
\item Similar to the proof of \ref{pp}, since for each $\alpha$,  we have $x_\alpha \leq x_\alpha$, therefore $x\leq y$ and $y\leq x$. Therefore, $x=y$.
\item We have $\{z_\gamma\}\subseteq \{x_\alpha\}$. Therefore, $\{x_\alpha\}^u \subseteq \{z_\gamma\}^u$ and so $\{\{z_\gamma\}^u,  y\}^l \subseteq \{\{x_\alpha\}^u,  y\}^l$ for each $0 \leq y \in X$. Hence the proof is complete.
	\end{enumerate}
\end{proof}
\begin{proposition}
		Let $B$ be a projection band of ordered vector space $X$ and $ P_{B}$
	the corresponding band projection. If $\{x_\alpha\}\subseteq X$ is $semi$-$uo$-null in $X$, then $\{P_B(x_\alpha)\}$ is $semi$-$uo$-null in $B$.
\end{proposition}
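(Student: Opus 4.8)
The plan is to transport the dominating net witnessing $x_\alpha\xrightarrow{suo}0$ in $X$ through $P_B$ and verify directly that it witnesses $P_B(x_\alpha)\xrightarrow{suo}0$ in $B$. So let $\{y_\beta\}\subseteq X$ satisfy $y_\beta\downarrow 0$ together with: for every $\beta$ there is $\alpha_0$ with $\{\{\pm x_\alpha\}^u,y\}^l\subseteq\{y_\beta\}^l$ whenever $\alpha\ge\alpha_0$ and $0\le y\in X$. I would use $\{P_B(y_\beta)\}$ as the dominating net in $B$, invoking throughout the standard properties of a band projection: $P_B$ is a positive linear projection with $P_B|_B=\mathrm{id}$, and $0\le P_B(x)\le x$ for every $x\in X_+$.

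First I would check that $P_B(y_\beta)\downarrow 0$ in $B$. The net is decreasing by positivity of $P_B$, and each $P_B(y_\beta)\ge 0$; moreover, if $b\in B$ satisfies $b\le P_B(y_\beta)$ for all $\beta$, then $b\le P_B(y_\beta)\le y_\beta$ for all $\beta$, hence $b\le\inf_\beta y_\beta=0$. Thus $0=\inf_\beta P_B(y_\beta)$ in $B$ (indeed already in $X$).

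The core step is the set inclusion. Fix $\beta$, let $\alpha_0$ be as above, take $\alpha\ge\alpha_0$ and $0\le w\in B$, and let $z$ belong to $\{\{\pm P_B(x_\alpha)\}^u,w\}^l$, where all bounds are now taken inside $B$. The two facts that make things work are: (i) every common upper bound $v$ of $x_\alpha$ and $-x_\alpha$ in $X$ is positive, since $2v=v+v\ge x_\alpha+(-x_\alpha)=0$; and (ii) for such $v$, positivity and linearity of $P_B$ give $P_B(v)\ge P_B(x_\alpha)$ and $P_B(v)\ge -P_B(x_\alpha)$ with $P_B(v)\in B$, so $P_B(v)\in\{\pm P_B(x_\alpha)\}^u$ in $B$, whence $z\le P_B(v)$; combining with $0\le P_B(v)\le v$ (which applies by (i)) yields $z\le v$. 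Since also $z\le w$, we get $z\in\{\{\pm x_\alpha\}^u,w\}^l$ computed in $X$. Applying the hypothesis with $y:=w$ gives $z\le y_\beta$, and since $z\in B$, applying $P_B$ gives $z=P_B(z)\le P_B(y_\beta)$, i.e.\ $z\in\{P_B(y_\beta)\}^l$ in $B$. Hence $\{\{\pm P_B(x_\alpha)\}^u,w\}^l\subseteq\{P_B(y_\beta)\}^l$ in $B$, and together with $P_B(y_\beta)\downarrow 0$ in $B$ this shows $P_B(x_\alpha)\xrightarrow{suo}0$ in $B$.

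The step I expect to need the most care is bookkeeping about in which space the order bounds are taken: the upper bounds of $\pm P_B(x_\alpha)$ and the lower bounds defining $z$ live in $B$, while the hypothesis is applied in $X$, and the passage from ``$z\le y_\beta$ in $X$'' to ``$z\le P_B(y_\beta)$'' must be earned by applying $P_B$ and using $z\in B$. The one genuinely substantive ingredient, as opposed to a formality, is observation (i) — that common upper bounds of $x_\alpha$ and $-x_\alpha$ are automatically positive — since this is precisely what licenses the contraction $P_B(v)\le v$ that drives the whole computation.
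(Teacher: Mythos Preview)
Your argument is correct and follows the same plan as the paper: use $\{P_B(y_\beta)\}$ as the dominating net in $B$ and establish the inclusion $\{\{\pm P_B(x_\alpha)\}^u,w\}^l\subseteq\{P_B(y_\beta)\}^l$. Your execution is in fact tighter than the paper's in two places: the paper derives $\{\pm x_\alpha\}^u\subseteq\{\pm P_B(x_\alpha)\}^u$ from the bare assertion ``$\pm P_B(x_\alpha)\le\pm x_\alpha$'', whereas your observation (i) that any $v\in\{\pm x_\alpha\}^u$ is positive, combined with $P_B(v)\le v$, is the right justification; and at the end the paper invokes $P_B(z)\le z$ for an arbitrary lower bound $z$, while you correctly exploit $z\in B$ to get $z=P_B(z)\le P_B(y_\beta)$ from $z\le y_\beta$.
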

\begin{proof}
	Let $x_\alpha\xrightarrow{suo}0$ in $X$, therefore there is a net $\{y_\beta\}$, possibly over a different index set, such that $y_\beta \downarrow 0$ and for every $\beta$ there exists $\alpha_0$ such that
	$\{\{\pm(x_\alpha )\}^u,y\}^l\subseteq \{y_\beta\}^l$, whenever $\alpha \geq \alpha_0$ and for all $0\leq y \in X$. We know that $\pm P_B(x_\alpha)\leq \pm (x_\alpha)$ for all $\alpha$. Therefore, $\{\pm(x_\alpha)\}^u \subseteq \{\pm(P_B(x_\alpha))\}^u$.
	  So we have $\{\{\pm(P_B(x_\alpha))\}^u,  P_B(y)\}^l \subseteq \{\{\pm(x_\alpha)\}^u,  y\}^l\subseteq \{y_\beta\}^l$. Note that $P_B(y_\beta)\downarrow 0$ in $B$ and $P_B(y_\beta)\leq y_\beta$ for all $\beta$. Hence $P_B(y_\beta)\in \{y_\beta\}^l$. It means that for each $z\in X$ that $z\in\{\{\pm(P_B(x_\alpha))\}^u,  P_B(y)\}^l$, we have $z\leq (y_\beta)$. It is obvious that $P_B(z)\in \{\{\pm(P_B(x_\alpha))\}^u,  P_B(y)\}^l$, $P_B(z)\leq z$ and therefore $P_B(z)\in \{P_B(y_\beta)\}^l$. 
	  Therefore, 
	  $\{\pm(P_B(x_\alpha))\}^u, P_B(y)\}^l\subseteq\{P_B(y_\beta)\}^l$ and so $P_B(x_\alpha)\xrightarrow{suo}0$.
\end{proof}
\begin{definition}
 A net $\{x_\alpha\}$ in  ordered vector space $X$ is said to be $ suo$-Cauchy, if $\{x_\alpha - x_\beta\}_{(\alpha,  \beta)}$ $semi$-$uo$- converges to $0$ in $X$.
 \end{definition}
\begin{proposition}\label{abc}
		  Let $X$ be an ordered vector space with the property $RDP$.  Then
\begin{enumerate}
\item  if an $semi$-$uo$-Cauchy net $\{x_\alpha\}$ has an $semi$-$uo$-convergent subnet whose $semi$-$uo$-limit is $x$, then $x_\alpha \xrightarrow{suo}x$. 
\item each $semi$-$uo$-convergent net is a $semi$-$uo$-Cauchy net.
\end{enumerate}
\end{proposition}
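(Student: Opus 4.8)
The plan is to reduce both statements to three facts from Lemma~\ref{eli}: the translation identity~(1), linearity~(4) --- the only place where the $RDP$ hypothesis enters --- and stability of $semi$-$uo$-null nets under subnets~(7). The recurring bookkeeping device is an \emph{inflation/deflation} observation: if $\{z_\alpha\}_{\alpha\in A}$ is a net and $D$ is a directed set, then $\{z_\alpha\}_{(\alpha,\delta)\in A\times D}$ (value independent of $\delta$) is a subnet of $\{z_\alpha\}_{\alpha\in A}$ via the first coordinate projection, so $z_\alpha\xrightarrow{suo}z$ implies $\{z_\alpha\}_{(\alpha,\delta)}\xrightarrow{suo}z$; and conversely $\{z_\alpha\}_{(\alpha,\delta)}\xrightarrow{suo}z$ implies $z_\alpha\xrightarrow{suo}z$, since in Definition~\ref{ds} one may freeze the $D$-coordinate at any index past the given threshold and recover exactly the condition required on $\alpha$. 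One also checks routinely that the coordinate projections of a product of directed sets, and the map $(\alpha,\lambda)\mapsto(\alpha,h(\lambda))$ appearing below, are monotone and cofinal, so that Lemma~\ref{eli}(7) applies to them.

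\textbf{Part (1).} Let $\{x_\alpha\}_{\alpha\in A}$ be $semi$-$uo$-Cauchy and let $\{x_{h(\lambda)}\}_{\lambda\in\Lambda}$ be a subnet, with $h\colon\Lambda\to A$ monotone and cofinal, such that $x_{h(\lambda)}\xrightarrow{suo}x$. The map $(\alpha,\lambda)\mapsto(\alpha,h(\lambda))$ from $A\times\Lambda$ to $A\times A$ is monotone and cofinal, so $\{x_\alpha-x_{h(\lambda)}\}_{(\alpha,\lambda)}$ is a subnet of the $semi$-$uo$-null net $\{x_\alpha-x_\beta\}_{(\alpha,\beta)}$ and hence $semi$-$uo$-null by Lemma~\ref{eli}(7). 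Also $x_{h(\lambda)}-x\xrightarrow{suo}0$ by Lemma~\ref{eli}(1), and inflating this net to $A\times\Lambda$ via the second projection gives $\{x_{h(\lambda)}-x\}_{(\alpha,\lambda)}\xrightarrow{suo}0$ by Lemma~\ref{eli}(7). Summing with Lemma~\ref{eli}(4), $\{(x_\alpha-x_{h(\lambda)})+(x_{h(\lambda)}-x)\}_{(\alpha,\lambda)}=\{x_\alpha-x\}_{(\alpha,\lambda)}\xrightarrow{suo}0$; since this net does not depend on $\lambda$, deflation yields $x_\alpha-x\xrightarrow{suo}0$, i.e.\ $x_\alpha\xrightarrow{suo}x$ by Lemma~\ref{eli}(1).

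\textbf{Part (2).} Assume $x_\alpha\xrightarrow{suo}x$. By Lemma~\ref{eli}(1), $x_\alpha-x\xrightarrow{suo}0$, and by Lemma~\ref{eli}(4) also $-(x_\alpha-x)\xrightarrow{suo}0$. Inflating these two nets to $A\times A$ --- the first via the first projection, the second via the second --- and applying Lemma~\ref{eli}(7) gives $\{x_\alpha-x\}_{(\alpha,\beta)}\xrightarrow{suo}0$ and $\{-(x_\beta-x)\}_{(\alpha,\beta)}\xrightarrow{suo}0$; adding them by Lemma~\ref{eli}(4) gives $\{(x_\alpha-x)-(x_\beta-x)\}_{(\alpha,\beta)}=\{x_\alpha-x_\beta\}_{(\alpha,\beta)}\xrightarrow{suo}0$, which says precisely that $\{x_\alpha\}$ is $semi$-$uo$-Cauchy.

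The step I expect to need the most care is the deflation half of the auxiliary observation, which drives Part~(1): one must verify that the net $\{y_\beta\}$ and the inclusions $\{\{\pm(x_\alpha-x)\}^u,y\}^l\subseteq\{y_\beta\}^l$ furnished for the inflated net over $A\times\Lambda$ yield, after freezing the $\Lambda$-coordinate, exactly the inclusions demanded by Definition~\ref{ds} for the net over $A$ --- this is legitimate precisely because the terms are constant in the auxiliary coordinate. The remaining points (the monotonicity and cofinality of the index maps, and the two invocations of the $RDP$-dependent Lemma~\ref{eli}(4)) are routine.
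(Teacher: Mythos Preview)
Your proof is correct and rests on the same underlying idea as the paper's: decompose $x_\alpha-x=(x_\alpha-z_\gamma)+(z_\gamma-x)$ (respectively $x_\alpha-x_\beta=(x_\alpha-x)-(x_\beta-x)$) and use additivity of $semi$-$uo$-convergence, which is exactly where $RDP$ is needed. The difference is one of packaging and rigor. The paper redoes the $RDP$ splitting inline, working directly with the inclusions $\{\pm a\}^u+\{\pm b\}^u\subseteq\{\pm(a+b)\}^u$ and the induced containment of lower sets, and is informal about the index set over which a mixed expression like $x_\alpha-z_\gamma$ lives. You instead invoke Lemma~\ref{eli}(4) as a black box for additivity and make the index bookkeeping explicit via your inflation/deflation device together with Lemma~\ref{eli}(7). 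What your route buys is a clean, reusable mechanism for combining nets on different index sets; what the paper's route buys is self-containment (no appeal to the subnet lemma) at the cost of leaving the index-set issue tacit. One small remark: in the paper Lemma~\ref{eli}(7) is stated under the $RDP$ hypothesis, so your claim that $RDP$ enters only through~(4) is formally slightly off, though inspection of the paper's proof of~(7) shows it does not actually use $RDP$, and in any case $RDP$ is assumed throughout Proposition~\ref{abc}.
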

\begin{proof}
	\begin{enumerate}
\item		 Let $\{x_\alpha\}$ be a $semi$-$uo$-Cauchy net in $X$ and $\{z_\gamma\}$ be a subnet of $\{x_\alpha\}$ such that $z_\gamma \xrightarrow{suo}x$. We have $x_\alpha - x = x_\alpha - z_\gamma + z_\gamma -x $ and $-(x_\alpha -x ) = -(x_\alpha - z_\gamma + z_\gamma -x)$.  
		 	 We have $\{\{\pm (x_\alpha - z_\gamma)\}^u + \{\pm(z_\gamma-x)\}^u\}\subseteq \{\pm(x_\alpha -z_\gamma + z_\gamma - x)\}^u$. Therefore, $\{\{\pm(x_\alpha - x)\}^u,y\}^l = \{\{\pm(x_\alpha -z_\gamma + z_\gamma -x)\}^u, y \}^l \subseteq \{ \{\{\pm (x_\alpha - z_\gamma)\}^u + \{\pm(z_\gamma - x)\}^u\},  y\}^l$ for all $0\leq y\in X$.
		 Since $X$ has the $RDP$ property, $\{\{\pm(x_\alpha -x)\}^u, y \}^l\subseteq  \{ \{\{\pm (x_\alpha - z_\gamma)\}^u,  y\}^l + \{\{\pm(z_\gamma - x)\}^u\},  y\}^l$. Because by assumption, $x_\alpha - z_\gamma\xrightarrow{suo}0$ and $z_\gamma\xrightarrow{suo}x$, the proof is complete.
\item Without loss generality, by Lemma \ref{eli} we assume that  $\{x_\alpha\}$ be a $semi$-$uo$-convergent to $0$  in $X$.  Then there is a net $\{y_\sigma\}$ such that $y_\sigma \downarrow 0$ and for every $\sigma$ there exists $\alpha_0$ such that
	 $\{\{\pm(x_\alpha )\}^u,y\}^l\subseteq \{y_\sigma\}^l$, whenever $\alpha \geq \alpha_0$ and for all $0\leq y \in X$. Obviously that 
	 $$  \{\pm(x_\alpha )\}^u  +   \{\pm(x_\beta )\}^u   \subseteq       \frac{1}{2} \{\pm(x_\alpha-x_\beta )\}^u.$$
Let $y>0$. By property $RDP$, we have
$$  \{ \frac{1}{2} \{\pm(x_\alpha-x_\beta )\}^u ,y\}^l  \subseteq  \{ \{\pm(x_\alpha )\}^u ,y\}^l  +   \{\{\pm(x_\beta )\}^u  ,y\}^l    .$$	 
It follows that  $\{ \frac{1}{2} \{\pm(x_\alpha-x_\beta )\}^u ,y\}^l \subseteq   2\{y_\sigma\}^l$, and so proof follows.

\end{enumerate}		 
\end{proof}
\begin{theorem}\label{55}
	Let $X$ be an order dense subspace of an ordered vector space $Y$ and $\{x_\alpha\}\subseteq X$. $x_\alpha\xrightarrow{suo}x$ in $X$ iff $x_\alpha\xrightarrow{suo}x$ in $Y$.
\end{theorem}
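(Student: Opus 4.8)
The plan is to reduce to the case $x=0$ by Lemma~\ref{eli}(1) (which applies equally in $X$ and in $Y$), to record two transfer facts about the order dense subspace $X\subseteq Y$, and then to read off both implications. I write $A^{u_X},A^{l_X}$ (resp.\ $A^{u_Y},A^{l_Y}$) for the upper/lower bounds of a set $A$ computed in $X$ (resp.\ in $Y$); clearly $S^{u_X}=S^{u_Y}\cap X$ and $S^{l_X}=S^{l_Y}\cap X$ for every $S\subseteq X$. The starting point is that order density of $X$ in $Y$ is equivalent to the dual equality $y=\sup_Y\{u\in X:u\le y\}$ for all $y\in Y$ (apply the defining equality to $-y$ and use $-X=X$). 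Two consequences do all the work. Call the first (F1): a decreasing net $\{y_\beta\}\subseteq X$ satisfies $y_\beta\downarrow 0$ in $X$ if and only if $y_\beta\downarrow 0$ in $Y$; indeed, if $\inf_X\{y_\beta\}=0$ and $w\in Y$ lies below every $y_\beta$, then $w=\sup_Y\{u\in X:u\le w\}$ and each such $u$ is an $X$-lower bound of $\{y_\beta\}$, hence $u\le 0$, so $w\le 0$, while the reverse implication is immediate. Call the second (F2): for $a\in X$ and $y\in X_+$ one has $\{\{\pm a\}^{u_X},y\}^{l_X}=\{\{\pm a\}^{u_Y},y\}^{l_Y}\cap X$; the inclusion $\supseteq$ is trivial, and for $\subseteq$, if $v\in X$ lies below $y$ and below every $X$-upper bound of $\pm a$, then for any $b\in\{\pm a\}^{u_Y}$ we have $b=\inf_Y\{c\in X:c\ge b\}$ with each $c$ here an $X$-upper bound of $\pm a$, so $v\le c$ for all of them and therefore $v\le b$.

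For the implication ``$x_\alpha\xrightarrow{suo}0$ in $X$ implies $x_\alpha\xrightarrow{suo}0$ in $Y$'' I would keep the very same dominating net $\{y_\beta\}\subseteq X$; by (F1) it still satisfies $y_\beta\downarrow 0$ in $Y$. Given $\beta$, pick $\alpha_0$ from the definition in $X$. Fix $\alpha\ge\alpha_0$ and an arbitrary $y\in Y_+$; since $X$ is majorizing, choose $y'\in X$ with $y'\ge y$ (so $y'\in X_+$). If $w\in\{\{\pm x_\alpha\}^{u_Y},y\}^{l_Y}$, then $w=\sup_Y\{u\in X:u\le w\}$, and every such $u$ satisfies $u\le w\le y\le y'$ and $u\le w\le c$ for each $c\in\{\pm x_\alpha\}^{u_X}\subseteq\{\pm x_\alpha\}^{u_Y}$; thus $u\in\{\{\pm x_\alpha\}^{u_X},y'\}^{l_X}\subseteq\{y_\beta\}^{l_X}$, so $u\le y_\beta$. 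Hence $y_\beta$ bounds $\{u\in X:u\le w\}$ from above, giving $w\le y_\beta$. This proves $\{\{\pm x_\alpha\}^{u_Y},y\}^{l_Y}\subseteq\{y_\beta\}^{l_Y}$, i.e.\ $x_\alpha\xrightarrow{suo}0$ in $Y$.

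For the converse, ``$x_\alpha\xrightarrow{suo}0$ in $Y$ implies $x_\alpha\xrightarrow{suo}0$ in $X$'', let $\{y_\beta\}$ witness the convergence in $Y$. The essential difficulty is that $\{y_\beta\}$ need not lie in $X$, so one must manufacture a dominating net inside $X$. The candidate is indexed by the pairs $(\beta,u)$ with $u\in X$ and $u\ge y_\beta$, with value $u$, and with $(\beta,u)$ below $(\beta',u')$ when $\beta\preceq\beta'$ and $u\ge u'$; it is a decreasing family whose infimum in $X$ is $0$, because any $X$-lower bound of it lies below every element of $\{c\in X:c\ge y_\beta\}$, hence below $\inf_Y\{c\in X:c\ge y_\beta\}=y_\beta$ for all $\beta$, hence below $\inf_Y\{y_\beta\}=0$. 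Once this is a legitimate decreasing net $\{w_\gamma\}\subseteq X$ with $w_\gamma\downarrow 0$ and, for each $\gamma$, some $\beta(\gamma)$ with $y_{\beta(\gamma)}\le w_\gamma$, the defining inclusion in $X$ comes out at once: given $\gamma$, take $\alpha_0$ from the $Y$-condition for $\beta(\gamma)$; then for $\alpha\ge\alpha_0$ and $y\in X_+$, (F2) gives $\{\{\pm x_\alpha\}^{u_X},y\}^{l_X}\subseteq\{\{\pm x_\alpha\}^{u_Y},y\}^{l_Y}\cap X\subseteq\{y_{\beta(\gamma)}\}^{l_Y}\cap X\subseteq\{w_\gamma\}^{l_X}$.

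The step I expect to be the main obstacle is exactly making the pairs $(\beta,u)$ above into a genuine net: the index set must be directed, which requires that for $u_1\ge y_{\beta_1}$ and $u_2\ge y_{\beta_2}$ in $X$ there be $\beta_3\succeq\beta_1,\beta_2$ and some $u_3\in X$ with $y_{\beta_3}\le u_3\le u_1$ and $u_3\le u_2$. This is a mild order-interpolation requirement that does not follow from order density in the most naive way (already the set of elements of $X$ above a fixed $y_\beta$ may fail to be downward directed), so obtaining the directed dominating family, while keeping its infimum equal to $0$, is where order density together with the majorizing property have to be used carefully; everything else is routine manipulation of upper and lower bound sets together with (F1) and (F2).
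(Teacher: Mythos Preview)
Your plan coincides with the paper's: the same dominating net is kept for the forward direction (your (F1) is exactly what the paper imports as Proposition~5.1 of \cite{3}), and for the converse one replaces the net $\{y_\beta\}\subseteq Y$ by elements of $X$ lying above the $y_\beta$. Your forward argument is correct and considerably more careful than the paper's, which only observes that testing against $y\in Y_+$ is at least as restrictive as testing against $y\in X_+$ and then appeals to (F1).

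For the converse, the obstacle you isolate---directedness of the family of pairs $(\beta,u)$ with $u\in X$, $u\ge y_\beta$---is genuine, and the paper does not resolve it. The paper writes ``We define $z_\beta=\{z\in X:y_\beta\le z\}$. $z_\beta\downarrow 0$ in $X$,'' treating a set as though it were a single element of a decreasing net, and then passes directly to $\{y_\beta\}^l\subseteq\{z_\beta\}^l$ and the conclusion. No argument is offered for why this collection can be organized into a directed decreasing net in $X$ with infimum $0$, which is precisely the point you flag. So your proposal is already more honest than the paper's proof in that it names the missing step; neither argument, as written, closes this gap in the stated generality (arbitrary ordered $Y$, no lattice or Riesz decomposition hypothesis).
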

\begin{proof}
Let $x_\alpha\xrightarrow{suo}x$ in $X$ and $z\in \{\{\pm(x_\alpha - x)\}^u,y\}^l$ for all $0\leq y\in Y$. Therefore, $z\in \{\{\pm(x_\alpha - x)\}^u,y\}^l$ for all $0\leq y\in X$. By assumption there is a net $\{y_\beta\}\subseteq X$ that $y_\beta\downarrow 0$ in $X$ and $z\in \{y_\beta\}^l$. By Proposition 5.1 of \cite{3}, $y_\beta\downarrow	0$ in $Y$. Therefore, there is a net $\{y_\beta\}\subseteq Y$ that $y_\beta\downarrow 0$ and for each $\beta$ there is an $\alpha_0$ that for each $\alpha\geq \alpha_0$, $\{\{\pm(x_\alpha - x)\}^u,y\}^l\subseteq \{y_\beta\}^l$ for all $0\leq y \in Y$. So $x_\alpha\xrightarrow{suo}x$ in $Y$.

	Consequently, let $x_\alpha \xrightarrow{suo}x$ in $Y$. Therefore, there exists net $\{y_\beta\}\subseteq Y$ such that $y_\beta \downarrow 0$ and for every $\beta$ there exists $\alpha_0$ such that  $\{\{\pm(x_\alpha - x)\}^u,y\}^l\subseteq \{y_\beta\}^l$, whenever $\alpha \geq \alpha_0$ and for all $0\leq y \in Y$. Since $X$ is order dense in $Y$, therefore $X$ is majorizing in $Y$. Hence for each $\beta$, there is a $z\in X$ such that $y_\beta \leq z$. We define $ z_\beta = \{z\in X: y_\beta \leq z\}$. $z_\beta \downarrow 0$ in $X$. It is obvious that $\{y_\beta\}^l \subseteq \{z_\beta\}^l$. Therefore, for each $\beta$ there is an $\alpha_0$ that for each $\alpha\geq \alpha_0$, $\{\{\pm(x_\alpha - x)\}^u,y\}^l\subseteq \{z_\beta\}^l$ for all $0\leq y \in X$.
\end{proof}
\begin{corollary}
	Let $X$ be a pre-Riesz space with vector lattice cover $(Y,i)$ and $\{x_\alpha\}\subseteq X$ is $semi$-$uo$-null in $X$. By Theorem \ref{55}, $x_\alpha\xrightarrow{uo}0$ in $Y$. If $Y$ has a weak unit $u$ and $u\in X$, then by Lemma 3.2 of \cite{4b}, $x_\alpha\xrightarrow{suo}0$ in $X$ iff there is a net $\{y_\beta\}$, possibly over a different index set, such that $y_\beta \downarrow 0$ and for every $\beta$ there exists $\alpha_0$ such that
	$\{\{\pm(x_\alpha - x)\}^u,u\}^l\subseteq \{y_\beta\}^l$, whenever $\alpha \geq \alpha_0$.
\end{corollary}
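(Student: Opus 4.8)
The plan is to transfer the whole question to the vector lattice cover $Y$: identify $X$ with $i(X)$, so that $X$ is order dense, hence majorizing, in the vector lattice $Y$, and $u$ becomes a weak unit of $Y$ lying in $X$. The biconditional then splits into an essentially trivial implication and a substantive one, the latter being a chain of already-available results. (Throughout I take the convergence to be to $0$, so $\{\pm(x_\alpha-x)\}^u$ reads $\{\pm x_\alpha\}^u$.)

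For the easy implication --- $x_\alpha\xrightarrow{suo}0$ in $X$ implies the displayed condition with $u$ --- I would simply specialise the witnessing net: by Definition \ref{ds} there is $\{y_\beta\}\subseteq X$ with $y_\beta\downarrow0$ and, eventually, $\{\{\pm x_\alpha\}^u,y\}^l\subseteq\{y_\beta\}^l$ for \emph{all} $0\le y\in X$, and $u$ is such a $y$ (a weak unit is positive in $Y$ and $i$ is bipositive, so $u\ge0$ in $X$). For the converse, starting from $\{y_\beta\}\subseteq X$ with $y_\beta\downarrow0$ in $X$ and $\{\{\pm x_\alpha\}^u,u\}^l\subseteq\{y_\beta\}^l$ eventually, I would run the chain: this gives $|x_\alpha|\wedge u\le y_\beta$ in $Y$ eventually; since $y_\beta\downarrow0$ in $X$ forces $y_\beta\downarrow0$ in $Y$ by Proposition 5.1 of \cite{3}, we get $|x_\alpha|\wedge u\xrightarrow{o}0$ in $Y$; by Lemma 3.2 of \cite{4b} (as $u$ is a weak unit of $Y$) this means $x_\alpha\xrightarrow{uo}0$ in $Y$; by Proposition \ref{dd} (as $Y$ is a vector lattice) $x_\alpha\xrightarrow{suo}0$ in $Y$; and finally by Theorem \ref{55} (as $X$ is order dense in $Y$) $x_\alpha\xrightarrow{suo}0$ in $X$.

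The only step that takes real work is the first link of that chain: reading the $X$-inclusion $\{\{\pm x_\alpha\}^u,u\}^l\subseteq\{y_\beta\}^l$ as the $Y$-inequality $|x_\alpha|\wedge u\le y_\beta$. I would do this with two uses of order density. First, for $z\in X$ one has $z\ge\pm x_\alpha$ iff $z\ge|x_\alpha|$ (join in $Y$), so $\{\pm x_\alpha\}^u=\{z\in X:z\ge|x_\alpha|\}$; order density gives $|x_\alpha|=\inf_Y\{z\in X:z\ge|x_\alpha|\}$, whence the lower bounds in $X$ of $\{\pm x_\alpha\}^u\cup\{u\}$ are exactly $\{w\in X:w\le|x_\alpha|\wedge u\}$. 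Second, applying the defining identity of order density to $-v$ with $v:=|x_\alpha|\wedge u\in Y$ yields $v=\sup_Y\{w\in X:w\le v\}$, so any $y_\beta\in X$ dominating $\{w\in X:w\le v\}$ must satisfy $y_\beta\ge v$, which is the inequality wanted. I expect this bookkeeping of upper/lower bounds in $X$ versus in $Y$ --- and getting the direction of order density right --- to be the main obstacle; everything else is a concatenation of Theorem \ref{55}, Proposition \ref{dd}, Lemma 3.2 of \cite{4b}, and Proposition 5.1 of \cite{3}.
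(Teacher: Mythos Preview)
Your proposal is correct and follows exactly the route the paper intends: the corollary in the paper carries no separate proof --- the justification is embedded in the statement via the citations to Theorem~\ref{55} and Lemma~3.2 of \cite{4b} --- and your plan simply unpacks that chain (pass to $Y$, use the weak-unit lemma there, return via Theorem~\ref{55} and Proposition~\ref{dd}). The order-density bookkeeping you single out, showing that the $X$-inclusion $\{\{\pm x_\alpha\}^u,u\}^l\subseteq\{y_\beta\}^l$ yields $|x_\alpha|\wedge u\le y_\beta$ in $Y$, is precisely the detail the paper suppresses, and your two-step argument (identify $\{\pm x_\alpha\}^u$ with $\{z\in X:z\ge|x_\alpha|\}$, then use $v=\sup_Y\{w\in X:w\le v\}$) is the right way to bridge it.
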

\begin{theorem}\label{54}
	Let $X$ be a pre-Riesz space with vector lattice cover $(Y,i)$. Then the following assertions are true.
	\end{theorem}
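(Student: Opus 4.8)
The plan is to reduce every assertion of the theorem to the corresponding statement about unbounded order convergence in the vector lattice cover $Y$, where the classical theory developed in \cite{4} and \cite{4b} is available, and then to transport the conclusions back to $X$.

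First I would fix the dictionary between the two spaces. Since $i$ is bipositive we may identify $X$ with the order dense subspace $i(X)$ of $Y$. By Theorem \ref{55}, for a net $\{x_\alpha\}\subseteq X$ and $x\in X$ one has $x_\alpha\xrightarrow{suo}x$ in $X$ if and only if $x_\alpha\xrightarrow{suo}x$ in $Y$; and because $Y$ is a vector lattice, Proposition \ref{dd} turns the latter into $x_\alpha\xrightarrow{uo}x$ in $Y$. Thus
\[
x_\alpha\xrightarrow{suo}x \text{ in } X \iff i(x_\alpha)\xrightarrow{uo}i(x) \text{ in } Y,
\]
which is the basic equivalence and the engine for everything else.

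For the assertions involving disjoint elements or disjoint sequences I would first record that the embedding $i$ preserves disjointness: if $x\perp y$ in $X$, then $i(x)\perp i(y)$ in $Y$. This is van Haandel's result (see \cite{5}) and follows from the fact that the sets $\{\pm(x+y)\}^u$ and $\{\pm(x-y)\}^u$ are determined by the order, which $i$ respects, together with order density. Consequently a disjoint sequence $\{x_n\}\subseteq X$ is carried by $i$ to a disjoint sequence $\{i(x_n)\}$ in $Y$; the classical fact that disjoint sequences in a vector lattice are $uo$-null gives $i(x_n)\xrightarrow{uo}0$ in $Y$, and the basic equivalence returns $x_n\xrightarrow{suo}0$ in $X$. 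Any remaining assertions — about order bounded nets, weak units, ideals or bands — follow by the same three-step scheme: push the net into $Y$ via $i$, invoke the relevant known $uo$-theorem there (order bounded $uo$-nets are $\tilde o$-convergent; in the presence of a weak unit $u$ one may test only against $u$, cf.\ the Corollary above; bands are $uo$-closed), and pull back using Theorem \ref{55}, Proposition \ref{dd}, and, where the persistence of $\downarrow 0$ is needed, Proposition 5.1 of \cite{3}.

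The step I expect to require the most care is the transfer itself rather than any single assertion. Concretely, when passing from $Y$ back to $X$ one must ensure that the decreasing net witnessing convergence can be taken inside $X$; this is handled exactly as in the second half of the proof of Theorem \ref{55}, replacing each $y_\beta$ by the infimum over $X$ of its $X$-majorants (available because an order dense subspace is majorizing). One must likewise check that the quantifier pattern ``for every $\beta$ there is $\alpha_0$ such that $\{\{\pm(x_\alpha-x)\}^u,y\}^l\subseteq\{y_\beta\}^l$ for all $\alpha\ge\alpha_0$'' is preserved in both directions — routine once the dictionary is set up, but worth stating explicitly since disjointness and order density are the only structural bridges between $X$ and $Y$ at our disposal.
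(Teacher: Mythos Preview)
Your plan is essentially the paper's own: establish the dictionary $x_\alpha\xrightarrow{suo}0$ in $X$ $\Leftrightarrow$ $i(x_\alpha)\xrightarrow{uo}0$ in $Y$ via Theorem~\ref{55} and Proposition~\ref{dd}, and then feed each assertion through $Y$. For part~(1) this \emph{is} the dictionary; for part~(2) the paper likewise pushes disjointness into $Y$ (it cites Proposition~5.9 of \cite{3} rather than \cite{5}, but the content is the same), applies Corollary~3.6 of \cite{4}, and pulls back.

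Where your sketch drifts from the actual theorem is in the ``remaining assertions'': parts~(3) and~(4) are not about weak units, ideals or bands but about \emph{topological} conclusions (weak-null, norm-null) under norm hypotheses on $Y$. The forward transfer to $Y$ still goes through your dictionary, but the return trip does \emph{not} use Theorem~\ref{55} or Proposition~5.1 of \cite{3}; instead the paper invokes Wickstead's theorem \cite{w} to get $x_\alpha\xrightarrow{w}0$ in $Y$ and then Hahn--Banach (Theorem~3.6 of \cite{ru}) to restrict weak convergence to the subspace $X$, and for~(4) it uses Proposition~3.7 of \cite{4b} (almost order bounded $uo$-null nets are norm-null in an order continuous Banach lattice) followed by the trivial observation that norm convergence in $Y$ restricts to $X$. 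Your order-theoretic pull-back machinery is simply the wrong tool for these two items, so you should replace that paragraph with the specific topological citations.
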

\begin{enumerate}
	\item  $\{x_\alpha\}\subseteq X$ is $semi$-$uo$-null iff $\{i(x_\alpha)\}$ is $semi$-$uo$-null in $Y$.\label{ii}
	\item If the sequence $\{x_n\}\subseteq X$ is disjoint, then $x_n\xrightarrow{suo}0$ in $X$.
	
	Moreover, if  $X$ is normed space and
	\item  $Y, Y^\prime$ have order continuous norm and $\{x_\alpha\}\subseteq X$ is $semi$-$uo$-null, then $\{x_\alpha\}$ is $w$-null.
	\item $\{x_\alpha\}\subseteq X$ is order bounded, $x_\alpha\xrightarrow{suo}0$ and $Y$ is a Banach lattice with order continuous norm, then $\{x_\alpha\}$ is norm-null.
\end{enumerate}
\begin{proof}
	\begin{enumerate}
		\item Let $\{x_\alpha\}\subseteq X$ is $semi$-$uo$-null in $X$. Therefore, there is a net $\{y_\beta\}$, possibly over a different index set, such that $y_\beta \downarrow 0$ and for every $\beta$ there exists $\alpha_0$ such that
		$\{\{\pm(x_\alpha )\}^u,y\}^l\subseteq \{y_\beta\}^l$, whenever $\alpha \geq \alpha_0$ and for all $0\leq y \in X$. By Lemma 1 of \cite{2}, $i(y_\beta)\downarrow 0$ in $i(X)$. It is obvious that $\{i(x_\alpha)\}$ is $semi$-$uo$-null in $i(X)$. Since $i(X)$ is order dense in $Y$, then by Proposition \ref{55}, $\{i(x_\alpha)\}$ is $semi$-$uo$-null in $Y$.
		
		Conversely, let $\{x_\alpha\}\subseteq X$ and $x_\alpha\xrightarrow{suo}0$ in $Y$. Hence $x_\alpha\xrightarrow{uo}0$ in $Y$ and therefore it is $semi$-$uo$-null in $i(X)$. Therefore, there is a net $\{i(y_\beta)\}$, possibly over a different index set, such that $i(y_\beta) \downarrow 0$ and for every $\beta$ there exists $\alpha_0$ such that
		$$\{\{\pm i(x_\alpha )\}^u,i(y)\}^l\subseteq \{i(y_\beta)\}^l,$$ whenever $\alpha \geq \alpha_0$ and for all $0\leq i(y) \in i(X)$. We have $i(\{\{\pm (x_\alpha )\}^u,y\}^l) =
		 \{\{\pm i(x_\alpha )\}^u,i(y)\}^l\subseteq \{i(y_\beta)\}^l = i(\{y_\beta\}^l).$ Since $i$ is a bipositive operator, therefore $\{\{\pm (x_\alpha )\}^u,y\}^l\subseteq \{y_\beta\}^l$ and so $x_\alpha\xrightarrow{suo}0$ in $X$.
\item	Since $X$ is an order dense subspace of $Y$, by Proposition 5.9 of \cite{3}, $\{x_n\}$ is disjoint in $Y$. By Corollary 3.6 of \cite{4}, $x_n\xrightarrow{uo}0$ in $Y$. Therefore, $x_n\xrightarrow{suo}0$ in $i(X)$. By \ref{ii}, $x_n\xrightarrow{suo}0$ in $X$.
\item Let $\{x_\alpha\}\subseteq X$ be $semi$-$uo$-null. Then by Proposition \ref{55}, $x_\alpha\xrightarrow{uo}0$ in $Y$. Since $Y$ and  $Y^\prime$ have order continuous norm, then by Theorem 5 of \cite{w}, $x_\alpha\xrightarrow{w}$ in $Y$. By Theorem 3.6 of \cite{ru}, $x_\alpha\xrightarrow{w}0$ in $X$.
\item We have $x_\alpha\xrightarrow{uo}0$ in $Y$. Since $Y$ is a Banach lattice with order continuous norm and the net $\{x_\alpha\}$ is almost order bounded in $Y$, therefore by Proposition 3.7 of \cite{4b}, $\{x_\alpha\}$ is norm-null in $Y$ and so is norm-null in $X$.
\end{enumerate}
\end{proof}
\begin{definition}
	A subset $J$ of ordered vector space $X$ is said to be $semi$-$uo$-closed, if $\{x_\alpha\}\subseteq J$ with $x_\alpha\xrightarrow{suo}x$ implies $x\in J$.
\end{definition}
\begin{remark}
	Let $J\subseteq X$ be a $semi$-$uo$-closed set and $\{x_\alpha\}\subseteq J$ that $x_\alpha\xrightarrow{\tilde{o}}x$. It is obvious that $x_\alpha\xrightarrow{suo}x$. Since $J$ is $semi$-$uo$-closed, therefore $x\in J$. So $J$ is $\tilde{o}$-closed.
\end{remark}
\begin{theorem}\label{kk}
	The following assertions are true.
\end{theorem}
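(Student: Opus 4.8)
The assertions in this theorem single out natural classes of subsets of $X$ --- such as disjoint complements, bands, and ideals --- that turn out to be $semi$-$uo$-closed, so for each item the plan is the same. I take a net $\{x_\alpha\}$ lying in the set under consideration with $x_\alpha \xrightarrow{suo} x$, and I must show that $x$ lies in that set as well. The manoeuvre I will use throughout is to pass to a vector lattice cover $(Y,i)$ of $X$ (Theorem 4.3 of \cite{3}): by Lemma \ref{eli}(1) together with Theorem \ref{54}(\ref{ii}) one obtains $i(x_\alpha) \xrightarrow{suo} i(x)$ in $Y$, and since $Y$ is a vector lattice, Proposition \ref{dd} upgrades this to $i(x_\alpha) \xrightarrow{uo} i(x)$. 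In $Y$ I may use lattice operations freely, apply the standard $uo$-arguments, and then transport the conclusion back through the bipositive embedding $i$.

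The driving case is the disjoint complement $M^{d}$ of a subset $M \subseteq X$. Fix $m \in M$ and let $\{x_\alpha\} \subseteq M^{d}$ with $x_\alpha \xrightarrow{suo} x$. The disjointness $x_\alpha \perp m$ in $X$ transfers, under the order-dense bipositive embedding, to lattice disjointness in $Y$ (Proposition 5.9 of \cite{3}), so $|i(x_\alpha)| \wedge |i(m)| = 0$ for every $\alpha$. In the vector lattice $Y$ I then invoke the elementary estimate $|i(x)| \wedge |i(m)| \leq |i(x) - i(x_\alpha)| \wedge |i(m)| + |i(x_\alpha)| \wedge |i(m)|$, which follows from $|i(x)| \leq |i(x) - i(x_\alpha)| + |i(x_\alpha)|$ together with subadditivity of the map $(\,\cdot\,) \wedge |i(m)|$ on the positive cone. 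The second summand vanishes, and from $i(x_\alpha) \xrightarrow{uo} i(x)$ applied with test vector $y = |i(m)|$, the first summand is, for $\alpha$ large, dominated by a term $y_\beta$ of the witnessing net, where $y_\beta \downarrow 0$. Since $|i(x)| \wedge |i(m)|$ does not depend on $\alpha$, it is $\leq y_\beta$ for every $\beta$, hence equal to $0$. Thus $i(x) \perp i(m)$ in $Y$, and bipositivity of $i$ gives $x \perp m$ in $X$. As $m \in M$ was arbitrary, $x \in M^{d}$, so $M^{d}$ is $semi$-$uo$-closed.

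The band case is then immediate, since a band $B = B^{dd}$ is a disjoint complement; and any further set the theorem asserts to be $semi$-$uo$-closed (an ideal, a $\tilde{o}$-closed subspace, or the like) should yield to the same transfer-to-$Y$ strategy: invoke the corresponding known fact for $uo$-convergence in the vector lattice $Y$, then pull it back using that $i$ is bipositive and that $i(X)$ is majorizing in $Y$, exactly as in the proof of Theorem \ref{55}. The Remark preceding the statement also gives for free the easy half, namely that every $semi$-$uo$-closed set is $\tilde{o}$-closed.

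The step I expect to be the real obstacle is not any single computation but the bookkeeping around the vector lattice cover: I must make sure that disjoint complements, bands, and ideals of $X$ correspond to the right objects in $Y$ under the embedding (Propositions 5.1 and 5.9 of \cite{3} are the tools here), and that the null net witnessing $semi$-$uo$-convergence inside $Y$ can be replaced by one drawn from $X$ --- this is precisely the majorizing argument of Theorem \ref{55}, and it should be carried out carefully rather than merely gestured at. Once that transfer is secured, what remains are the routine manipulations with the operators $\{\cdot\}^{u}$ and $\{\cdot\}^{l}$ and the standard vector lattice estimates for $uo$-limits.
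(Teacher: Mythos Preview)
Your proposal addresses the wrong statement. Theorem~\ref{kk} is not about showing that disjoint complements, bands, or ideals are $semi$-$uo$-closed; its two assertions are transfer results for an \emph{arbitrary} $semi$-$uo$-closed set $J$: (1) if $X$ is order dense in an ordered vector space $Y$ and $J\subseteq Y$ is $semi$-$uo$-closed in $Y$, then $J\cap X$ is $semi$-$uo$-closed in $X$; (2) for a pre-Riesz space $X$ with vector lattice cover $(Y,i)$, a subset $J\subseteq X$ is $semi$-$uo$-closed in $X$ iff $i(J)$ is $semi$-$uo$-closed in $i(X)$. None of your work on $M^{d}$ or on bands bears on these claims.

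The paper's proof is much more elementary than the machinery you invoke. For (1), one takes $\{x_\alpha\}\subseteq J\cap X$ with $x_\alpha\xrightarrow{suo}x$ in $X$ and uses order density (exactly the argument of Theorem~\ref{55}) to promote this to $semi$-$uo$-convergence in $Y$; closedness of $J$ in $Y$ then gives $x\in J$. For (2), both directions amount to the observation that, because $i$ is bipositive, the inclusions $\{\{\pm(x_\alpha-x)\}^u,y\}^l\subseteq\{y_\beta\}^l$ in $X$ correspond bijectively to the same inclusions for $i(x_\alpha),i(x),i(y),i(y_\beta)$ in $i(X)$, together with Lemma~1 of \cite{2} to transport $y_\beta\downarrow 0$. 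No passage to $uo$-convergence in $Y$, no lattice estimates, and no appeal to Proposition~5.9 of \cite{3} are needed. Your transfer-to-cover instinct is not irrelevant---it is essentially what drives (2)---but here it is applied at the level of the definition of $semi$-$uo$-closedness itself, not to particular examples of closed sets.
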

\begin{enumerate}
\item	Let $X$ be an order dense subspace of an ordered vector space $Y$. If $J \subseteq Y$ is $semi$-$uo$-closed in $Y$,  then $J \cap X$ is $semi$-$uo$-closed in $X$.
\item Let $X$ be a pre-Riesz space with vector lattice cover $(Y,i)$.  $J\subseteq X$ is $semi$-$uo$-closed iff $i(J)$ is $semi$-$uo$-closed in $i(X)$.
\end{enumerate}
\begin{proof}
	\begin{enumerate}
		\item Let $\{x_\alpha\}\subseteq J\cap X$ and $x_\alpha\xrightarrow{suo}x$. There exists a net $\{y_\beta\}\subseteq X$ that $y_\beta\downarrow 0 $ and for each $\beta$ there exists $\alpha_0$ that  $\{\{\pm(x_\alpha - x)\}^u,y\}^l\subseteq \{y_\beta\}^l$, whenever $\alpha \geq \alpha_0$ and for all $0\leq y \in X$. Since $X$ is order dense in $Y$, then for each $z\in Y$, $\{\{\pm(x_\alpha - x)\}^u,z\}^l\subseteq \{\{\pm(x_\alpha - x)\}^u,y\}^l\subseteq \{y_\beta\}^l$ and $y_\beta \downarrow 0$ in $Y$. Because $\{x_\alpha\}\subseteq J$ and $J$ is $semi$-$uo$-closed in $Y$, therefore $x\in J$. 
		\item Let $\{x_\alpha\}\subseteq J$ and $x_\alpha\xrightarrow{suo}x$. It is obvious that $i(x_\alpha)\xrightarrow{suo}i(x)$. Since $i(J)$ is $semi$-$uo$-closed, therefore $i(x)\in i(J)$. So $x\in J$.
		
		Conversely, let $\{i(x_\alpha)\}\subseteq i(J)$ and $i(x_\alpha)\xrightarrow{suo}i(x)$. There exists a net $\{i(y_\beta)\}\subseteq i(X)$ that $i(y_\beta)\downarrow0 $ and for each $\beta$ there exists $\alpha_0$ that  $\{\{\pm(i(x_\alpha - x)\}^u,i(y)\}^l\subseteq \{i(y_\beta)\}^l$, whenever $\alpha \geq \alpha_0$ and for all $0\leq i(y) \in i(X)$. By Lemma 1 of \cite{2}, $y_\beta\downarrow 0$ in $X$. Therefore, $x_\alpha\xrightarrow{suo}x$. Since $\{x_\alpha\}\subseteq J$ and $J$ is $semi$-$uo$-closed, hence $x\in J$ and therefore $i(x)\in i(J)$. 
	\end{enumerate}
\end{proof}
\begin{theorem}
	Let $J$ be a $semi$-$uo$-closed subspace in Archimedean ordered vector space $X$  and $\{x_\alpha\}\subseteq J$. $x_\alpha\xrightarrow{suo}x$ in $J$ iff $x_\alpha\xrightarrow{suo}x$ in $X$.
\end{theorem}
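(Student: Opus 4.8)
The strategy is to reduce to the case $x=0$ and then to transfer the witnessing decreasing nets and the bound inclusions between $J$ and $X$; the two new inputs are that $J$ is $semi$-$uo$-closed and that $X$ is Archimedean. For the reduction, observe that if $x_\alpha\xrightarrow{suo}x$ in $X$ then $x\in J$, because $\{x_\alpha\}\subseteq J$ and $J$ is $semi$-$uo$-closed; and $x_\alpha\xrightarrow{suo}x$ in $J$ forces $x\in J$ by definition. So in either case $x\in J$, the translated net $\{x_\alpha-x\}$ stays inside the subspace $J$, and by Lemma \ref{eli}(1), applied once in $J$ and once in $X$, it suffices to prove: for $\{x_\alpha\}\subseteq J$, $x_\alpha\xrightarrow{suo}0$ in $J$ iff $x_\alpha\xrightarrow{suo}0$ in $X$.

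I would first isolate the auxiliary fact that a decreasing net $\{y_\beta\}\subseteq J$ satisfies $y_\beta\downarrow 0$ in $J$ if and only if $y_\beta\downarrow 0$ in $X$: the implication ``in $X$ $\Rightarrow$ in $J$'' is immediate, since every $J$-lower bound is an $X$-lower bound, while for the converse one uses that $X$ is Archimedean to exclude an $X$-lower bound $z$ of $\{y_\beta\}$ with $z\not\le 0$ (this transfer of ``$\inf=0$'' genuinely fails for suitable non-Archimedean $X$, which is why the hypothesis appears). Granting this, the direction $x_\alpha\xrightarrow{suo}0$ in $J\Rightarrow x_\alpha\xrightarrow{suo}0$ in $X$ reduces to checking that a witnessing net $\{y_\beta\}\subseteq J$ still witnesses in $X$: for $0\le y\in X$ and a common lower bound $z$, taken in $X$, of $\{\pm x_\alpha\}^u\cup\{y\}$, note that $z$ lies below every upper bound of $\pm x_\alpha$ in $J$ as well, so comparing $z$ with a positive upper bound of $\pm x_\alpha$ drawn from $J$ (available because $\pm x_\alpha\in J$) and invoking the $J$-inclusion gives $z\le y_\beta$. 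For the reverse implication one cannot lift $\{y_\beta\}\subseteq X$ into $J$ directly, so I would instead pass to a decreasing net inside the downward-directed family $\{y_\beta\}^l\cap J$ (whose intersection over $\beta$ is $\{j\in J:j\le 0\}$), obtaining a net $\downarrow 0$ in $J$, and then deduce the $J$-inclusion from the $X$-inclusion using $\{\pm x_\alpha\}^u\cap J\subseteq\{\pm x_\alpha\}^u$ together with $x_\alpha\in J$.

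The principal obstacle is precisely this comparison of order bounds computed in the subspace $J$ versus in the ambient $X$: because $J$ is only assumed $semi$-$uo$-closed — hence, in general, neither order dense nor majorizing in $X$, so that Theorem \ref{55} does not apply — an element realizing a lower-bound set in $X$ need not lie in $J$, and an $X$-upper bound of $\pm x_\alpha$ need not be approached from within $J$. Controlling this forces one to lean on the membership $x_\alpha\in J$ to keep the relevant upper bounds inside $J$, and on the Archimedean property to move the relation $\inf=0$ between the two spaces; the Remark preceding the theorem (that $semi$-$uo$-closed subspaces are $\tilde o$-closed) is what makes the net extracted on the $J$-side legitimate.
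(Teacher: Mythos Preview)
Your overall architecture differs from the paper's in which direction is treated as routine. The paper disposes of ``$x_\alpha\xrightarrow{suo}x$ in $X\Rightarrow$ in $J$'' in one line (``since $J$ is $semi$-$uo$-closed it is obvious''), whereas you regard this as the harder direction and propose extracting a decreasing net from $\{y_\beta\}^l\cap J$. For the direction $J\Rightarrow X$, both you and the paper transfer the witnessing net $\{y_\beta\}\subseteq J$ to $X$ and then argue that the bound inclusion persists, so the strategies align there.

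The substantive divergence, and the genuine gap in your sketch, is your auxiliary fact that $y_\beta\downarrow 0$ in $J$ implies $y_\beta\downarrow 0$ in $X$. You assert this follows from the Archimedean property but give no mechanism; Archimedeanness by itself does not force the infimum computed in an arbitrary subspace to agree with the infimum in the ambient space, so as written this step is unjustified. The paper handles precisely this point by a different device that you do not anticipate: it takes the $X$-infimum $z$ of $\{y_\beta\}$ (this is where the paper invokes the Archimedean hypothesis), notes $y_\beta\xrightarrow{\tilde o}z$ and hence $y_\beta\xrightarrow{suo}z$ in $X$, then feeds this back through the \emph{already established} direction $X\Rightarrow J$ to obtain $y_\beta\xrightarrow{suo}z$ in $J$, and finally invokes uniqueness of $suo$-limits (Lemma~\ref{eli}) to conclude $z=0$. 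Thus in the paper the $semi$-$uo$-closedness of $J$ is used not merely to place $x$ in $J$, as you do, but---via the first direction and limit uniqueness---to pin down the $X$-infimum of the witnessing net. That circular-looking bootstrap is the key idea your proposal is missing; your reduction to $x=0$ and your more careful treatment of the $X\Rightarrow J$ direction are refinements the paper does not carry out.
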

\begin{proof}
	Let $\{x_\alpha\}\subseteq J$ and $x_\alpha\xrightarrow{suo}x$ in $X$. Since $J$ is $semi$-$uo$-closed it is obvious that $x_\alpha\xrightarrow{suo}x$ in $J$.
	
	Conversely, let $\{x_\alpha\}\subseteq J$ and $x_\alpha\xrightarrow{suo}x$ in $J$. Therefore, there is a net $\{y_\beta\}\subseteq J$, possibly over a different index set, such that $y_\beta \downarrow 0$ in $J$ and for every $\beta$ there exists $\alpha_0$ such that
	$\{\{\pm(x_\alpha-x )\}^u,y\}^l\subseteq \{y_\beta\}^l$, whenever $\alpha \geq \alpha_0$ and for all $0\leq y \in B$. It is obvious that $y_\beta\downarrow $ in $X$. Since $X$ is Archimedean, that there exists a $z$ that  $y_\beta\downarrow z$ in $X$. It is clear that $y_\beta\xrightarrow{\tilde{o}}z$ in $X$ and so $y_\beta\xrightarrow{suo}z$ in $X$. Therefore, $y_\beta\xrightarrow{suo}z$ in $J$. Since by Lemma \ref{eli}, $semi$-$uo$-limits are unique, therefore $z=0$. Hence $y_\beta\downarrow 0$ in $X$. Let $k\in \{\{\pm(x_\alpha - x )\}^u,y\}^l$ for all $0\leq y\in X$. It is clear that $k\in \{\{\pm(x_\alpha - x )\}^u,y^\prime\}^l$ for all $0\leq y^\prime \in J$. So 
	$\{\{\pm(x_\alpha - x )\}^u,y\}^l\subseteq \{y_\beta\}^l$ for all $0\leq y \in X$. Therefore, $x_\alpha\xrightarrow{suo}x$ in $X$.
\end{proof}
\begin{proposition}
	A solid subset $J$ of an ordered vector space $X$ is $semi$-$uo$-closed iff $\{x_\alpha\}\subseteq J$and $0\leq x_\alpha \uparrow x$  imply $x\in J$.
\end{proposition}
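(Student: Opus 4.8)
The plan is to prove the two implications separately. The direction ``$semi$-$uo$-closed $\Rightarrow$ the stated closure property'' is immediate from the earlier Remark; all the work is in the converse, which I would reduce, via a vector lattice cover, to the vector lattice situation of Proposition \ref{dd}. For necessity: suppose $J$ is $semi$-$uo$-closed and let $\{x_\alpha\}\subseteq J$ with $0\le x_\alpha\uparrow x$. Then $x-x_\alpha\downarrow 0$, so $x_\alpha\xrightarrow{\tilde o}x$ with witnessing net $x-x_\alpha$, and hence $x_\alpha\xrightarrow{suo}x$ by the Remark after Definition \ref{ds}; since $\{x_\alpha\}\subseteq J$ and $J$ is $semi$-$uo$-closed, $x\in J$.

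For sufficiency, assume membership of $\{x_\alpha\}$ in $J$ together with $0\le x_\alpha\uparrow x$ forces $x\in J$; let $\{x_\alpha\}\subseteq J$ with $x_\alpha\xrightarrow{suo}x$, and fix a net $y_\beta\downarrow 0$ witnessing this, so that for each $\beta$ there is $\alpha_0$ with $\{\{\pm(x_\alpha-x)\}^u,y\}^l\subseteq\{y_\beta\}^l$ for all $\alpha\ge\alpha_0$ and all $0\le y\in X$. The target is to manufacture an increasing positive net $0\le u_\delta\uparrow u$ lying in $J$ whose supremum $u$ dominates $x$ in the solid sense, i.e. $\{\pm u\}^u\subseteq\{\pm x\}^u$. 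Once this is in hand, the standing hypothesis gives $u\in J$, and then solidity of $J$ (applied to $u\in J$ and $\{\pm u\}^u\subseteq\{\pm x\}^u$) gives $x\in J$, as desired.

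To build $\{u_\delta\}$ I would pass to a vector lattice cover: by Theorem 4.3 of \cite{3} (implicitly, $X$ being pre-Riesz, as in the rest of the paper) we may regard $X$ as an order dense subspace of a vector lattice $Y$ through a bipositive map $i$; by Theorem \ref{kk} and Theorem \ref{55} the properties ``$semi$-$uo$-closed'', ``closed under increasing positive nets'' and ``$x_\alpha\xrightarrow{suo}x$'' transfer between $J\subseteq X$ and $i(J)\subseteq i(X)$, with suprema of increasing nets preserved (Proposition 5.1 of \cite{3}, Lemma 1 of \cite{2}). So it suffices to argue in $i(X)$, and by Theorem \ref{55} with Proposition \ref{dd} we have $i(x_\alpha)\xrightarrow{uo}i(x)$ in $Y$. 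In $Y$ the construction is classical: since $(|i(x)|-|i(x_\alpha)|)^+\le|i(x_\alpha)-i(x)|\wedge|i(x)|\xrightarrow{o}0$, there is a net $w_\beta\downarrow 0$ in $Y$ (one may take $w_\beta=i(y_\beta)$) with $(|i(x)|-|i(x_\alpha)|)^+\le w_\beta$ eventually; set $u_\beta=(|i(x)|-w_\beta)^+$. Then $0\le u_\beta\uparrow|i(x)|$, and for each $\beta$ one has $u_\beta\le|i(x_\alpha)|$ for all sufficiently large $\alpha$; granting that this, via solidity, places each $u_\beta$ in $i(J)$, the increasing-net hypothesis yields $|i(x)|\in i(J)$ and solidity yields $i(x)\in i(J)$, i.e. $x\in J$.

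The one genuinely delicate point is this last construction. An ordered vector space need not carry lattice operations, so the modulus $|x|$ and the truncations $(|x|-w_\beta)^+$ only make sense in the cover $Y$; moreover $i(J)$ is merely the trace on $i(X)$ of a solid subset of $Y$, so one must verify that the elements $u_\beta$ produced in $Y$ are themselves images of members of $J$ (which is what makes solidity of $i(J)$ applicable) and that the suprema taken in $Y$ descend to $i(X)$. The natural remedy — and the technical heart of the argument — is to run the construction using the $semi$-$uo$-witness $y_\beta$, which already lives in $X$, together with order density of $i(X)$ in $Y$ to pull the supremum $|i(x)|$ back to $i(X)$; everything else (the two transfer steps, and the bookkeeping with solidity and Lemma \ref{eli}) is routine.
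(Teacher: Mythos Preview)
Your forward direction matches the paper. The converse, however, takes a detour through a vector lattice cover that introduces genuine difficulties which you flag but do not resolve, and which the paper simply sidesteps.

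Concretely: the proposition is stated for an arbitrary ordered vector space, not a pre-Riesz one, so invoking a vector lattice cover already adds an unstated hypothesis. More seriously, your key objects $u_\beta=(|i(x)|-w_\beta)^+$ and their supremum $|i(x)|$ are built using the lattice operations of $Y$ and need not lie in $i(X)$. But solidity of $J$ transports only to solidity of $i(J)$ \emph{inside} $i(X)$, and your increasing-net hypothesis likewise applies only to nets in $i(J)\subseteq i(X)$. So the two steps ``$u_\beta\in i(J)$ by solidity'' and ``$|i(x)|\in i(J)$ by the hypothesis'' both presuppose membership in $i(X)$ that you have not established; order density does not by itself pull $|i(x)|$ or the truncations back into $i(X)$. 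Your closing paragraph names this gap but does not close it.

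The paper's argument stays entirely in $X$ and is much shorter: from the $suo$-witness $y_\beta\downarrow 0$ it uses the net $x-y_\beta$ itself. One checks directly that $\{\pm x_\alpha\}^u\subseteq\{\pm(x-y_\beta)\}^u$ for $\alpha\ge\alpha_0(\beta)$, so solidity of $J$ gives $x-y_\beta\in J$; since $x-y_\beta\uparrow x$, the standing hypothesis yields $x\in J$. No cover, no modulus, no positive parts --- the element $x-y_\beta$ already lives in $X$, which is exactly what your construction fails to guarantee.
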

\begin{proof}
	Let $\{x_\alpha\}\subseteq J$ and $0\leq x_\alpha \uparrow x$. It is clear that $x_\alpha\xrightarrow{o}x$ and therefore $x_\alpha\xrightarrow{suo}x$. Because $J$ is $semi$-$uo$-closed imply $x\in J$.
	
Conversely, let a net $\{x_\alpha\}\subseteq J$ and $x_\alpha\xrightarrow{suo}x$. Therefore, there is a net $\{y_\beta\}\subseteq X$, possibly over a different index set, such that $y_\beta \downarrow 0$ in $X$ and for every $\beta$ there exists $\alpha_0$ such that
$\{\{\pm(x_\alpha-x )\}^u,y\}^l\subseteq \{y_\beta\}^l$, whenever $\alpha \geq \alpha_0$ and for all $0\leq y \in X$.	It is clear that $\{\pm x_\alpha\}^u \subseteq \{\pm (x - y_\beta)\}^u$. Because $J$ is solid, we have $\{x - y_\beta\}\subseteq J$. It is obvious that  $0\leq x- y_\beta \uparrow x$. It follows that $x\in J$. Hence $J$ is $semi$-$uo$-closed.
\end{proof}
\begin{definition}
	Let $B$ be an ideal in ordered vector space $X$. $B$ is said to be a $semi$-$uo$-band in $X$ if $B$ is $semi$-$uo$-closed in $X$.
\end{definition}
\begin{corollary}
	\begin{enumerate}
		\item Let $X$ be an order dense subspace of directed ordered vector space $Y$. If $J\subseteq Y$ is $semi$-$uo$-band in $Y$, then by Theorem \ref{kk}, $J\cap X$ is $semi$-$uo$-closed in $X$ and by Proposition 5.3 of \cite{3}, $J\cap X$ is an ideal in $X$. Therefore, $J\cap X$ is a $semi$-$uo$-band in $X$.
		
		 Let $X$ be a pre-Riesz space with vector lattice cover $(Y,i)$. 
		 \item It is clear that $J\subseteq X$ is an ideal in $X$ iff $i(J)$ is an ideal in $i(X)$ and by Theorem \ref{kk},  $J\subseteq X$ is $semi$-$uo$-closed in $X$ iff $i(J)$ is $semi$-$uo$-closed in $i(X)$. So, $J\subseteq X$ is a $semi$-$uo$-band in $X$ iff $i(J)$ is $semi$-$uo$-band in $i(X)$.
	\end{enumerate}
\end{corollary}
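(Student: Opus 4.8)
The plan is essentially bookkeeping, since a $semi$-$uo$-band is by definition an ideal that is $semi$-$uo$-closed; each of the two assertions therefore reduces to transferring these two properties separately and recombining them, using the transfer theorems already at our disposal.

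For part (1), I would assume that $J$ is a $semi$-$uo$-band in $Y$, so that $J$ is an ideal of $Y$ and $J$ is $semi$-$uo$-closed in $Y$. First apply Theorem \ref{kk}(1) verbatim to obtain that $J\cap X$ is $semi$-$uo$-closed in $X$; this is the step that uses order denseness of $X$ in $Y$. Next, since $Y$ is directed and $X$ is order dense (hence majorizing) in $Y$, Proposition 5.3 of \cite{3} applies to $J$ and yields that $J\cap X$ is an ideal of $X$. Combining the two conclusions gives that $J\cap X$ is an ideal of $X$ which is $semi$-$uo$-closed in $X$, i.e.\ a $semi$-$uo$-band in $X$, as claimed.

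For part (2), the only ingredient beyond Theorem \ref{kk}(2) is the elementary observation that $i$ transports the notion of ideal between $X$ and $i(X)$. Since $i\colon X\to i(X)$ is a bipositive linear bijection, it is an order isomorphism onto $i(X)$; it therefore carries subspaces of $X$ onto subspaces of $i(X)$, and because solidity is phrased purely through inclusions of sets of upper bounds, namely $\{\pm y\}^u\subseteq\{\pm x\}^u$, together with the identity $i\bigl(\{\pm y\}^u\bigr)=\{\pm i(y)\}^u$ computed inside $i(X)$, it carries solid sets to solid sets and reflects them as well. Hence $J$ is an ideal of $X$ if and only if $i(J)$ is an ideal of $i(X)$. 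Theorem \ref{kk}(2) gives, in parallel, that $J$ is $semi$-$uo$-closed in $X$ if and only if $i(J)$ is $semi$-$uo$-closed in $i(X)$. Intersecting the two equivalences yields that $J$ is a $semi$-$uo$-band in $X$ if and only if $i(J)$ is a $semi$-$uo$-band in $i(X)$.

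The proof is short precisely because the two deep inputs, Theorem \ref{kk} and Proposition 5.3 of \cite{3}, are already available; the only point that genuinely needs checking is that $i$ preserves and reflects solidity, i.e.\ that the upper bounds of $\{\pm i(y)\}$ taken in $i(X)$ are exactly the $i$-images of the upper bounds of $\{\pm y\}$ taken in $X$. This is immediate from bipositivity of $i$ once one notes that all the sets involved lie in $i(X)$, so I expect this to be the only step requiring more than a sentence. One should also confirm that the hypotheses of Proposition 5.3 of \cite{3} ($X$ order dense in $Y$, $Y$ directed) are exactly those assumed in part (1), which they are.
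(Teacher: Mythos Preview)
Your proposal is correct and follows exactly the paper's approach: the corollary's proof is embedded in its statement, combining Theorem \ref{kk} for the $semi$-$uo$-closedness transfer with Proposition 5.3 of \cite{3} (resp.\ bipositivity of $i$) for the ideal transfer. Your additional justification that $i$ preserves and reflects solidity via $i(\{\pm y\}^u)=\{\pm i(y)\}^u$ is a welcome elaboration of what the paper simply declares ``clear.''
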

\begin{proposition}\label{kjkj}
Let $X$ be a vector lattice. If an ideal $B\subseteq X$ is a $semi$-$uo$-band in $X$, then it is a band in $X$.
\end{proposition}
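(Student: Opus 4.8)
The plan is to reduce the statement to the preceding Proposition and then to the classical description of bands in a vector lattice. Note first that, being an ideal in a vector lattice, $B$ is solid and is itself a sublattice: if $b_1,b_2\in B$ then $b_1\vee b_2=\tfrac12\bigl(b_1+b_2+|b_1-b_2|\bigr)\in B$, since $|b_1-b_2|\le|b_1|+|b_2|\in B$. Because $B$ is a $semi$-$uo$-band it is $semi$-$uo$-closed, so the preceding Proposition applies and yields: whenever $\{x_\alpha\}\subseteq B$ and $0\le x_\alpha\uparrow x$ in $X$ then $x\in B$. Hence $B$ is an order closed ideal.

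Second, I would promote ``closed under increasing positive nets'' to ``closed under every supremum existing in $X$''. If $A\subseteq B$ and $x:=\sup A$ exists in $X$, fix $a_0\in A$ and let $\widehat A$ be the upward directed set of all finite suprema of elements of $A$; then $\widehat A\subseteq B$, $\sup\widehat A=x$, and the net $(b-a_0)_{b\in\widehat A,\ b\ge a_0}$ is positive, increasing, lies in $B$, and has supremum $x-a_0$, so the first step gives $x-a_0\in B$ and therefore $x=(x-a_0)+a_0\in B$.

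Finally I would show $B=B^{dd}$. The inclusion $B\subseteq B^{dd}$ is automatic; for the reverse it suffices, since $B$ is solid and $B^{dd}$ is an ideal, to handle $0\le x\in B^{dd}$. For such $x$ one invokes the standard vector lattice fact that $x=\sup\{x\wedge b: b\in B_+\}$; as $\{x\wedge b:b\in B_+\}$ is an upward directed subset of $B$, the second step forces $x\in B$. Thus $B^{dd}\subseteq B$ and $B$ is a band.

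The content of the first two steps is routine lattice bookkeeping. The genuine obstacle is the identity $x=\sup\{x\wedge b:b\in B_+\}$ for $0\le x\in B^{dd}$: given an upper bound $z$ with $0\le z\le x$, one must show that $u:=x-z$ satisfies $n\,(u\wedge b)\le x$ for all $b\in B_+$ and $n\in\mathbb N$, whence $u\wedge b=0$, so $u\in B^d\cap B^{dd}=\{0\}$ and $z=x$. This is precisely where the vector lattice structure — and, in the usual argument, the Archimedean property — is used, and it is the step I would have to be most careful about.
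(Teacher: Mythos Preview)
Your argument is correct, but the paper takes a much shorter route. Instead of invoking the preceding Proposition and then rebuilding the identity $B=B^{dd}$ by hand, the paper simply observes that for any net $\{x_\alpha\}\subseteq B$ with $x_\alpha\xrightarrow{o}x$ one has $x_\alpha\xrightarrow{uo}x$, and since $X$ is a vector lattice Proposition~\ref{dd} gives $x_\alpha\xrightarrow{suo}x$; as $B$ is $suo$-closed, $x\in B$. Thus $B$ is an order closed ideal, hence a band. Your approach is more self-contained --- you make the passage from ``order closed ideal'' to ``$B=B^{dd}$'' explicit via the formula $x=\sup\{x\wedge b:b\in B_+\}$ --- whereas the paper tacitly treats that implication as classical. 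Your closing remark about the Archimedean property is well taken: it is needed for that formula, and hence for the equivalence of ``order closed ideal'' and ``$B=B^{dd}$'', so the same hypothesis is implicitly present in the paper's argument as well.
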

\begin{proof}
	Let an ideal $B$ be a $semi$-$uo$-band in $X$ and $\{x_\alpha\}\subseteq X$ be a net that $x_\alpha\xrightarrow{o}x$. Therefore, $x_\alpha\xrightarrow{uo}x$ in $X$. Since $X$ is a vector lattice, then by Proposition \ref{dd}, $x_\alpha\xrightarrow{suo}x$ in $X$. By assumption, $x\in B$. So $B$ is a band in $X$.
\end{proof}


\begin{thebibliography}{99}
	\bibitem{1a} {\it  Buskes, G.,  van Rooji, A.C.M.}:
	{The vector lattice cover of certain partially ordered groups}. J. Austral. Math. Soc. Series A54, 352--367 (1993).
	\bibitem{3} {\it  Gaans, O.V.,  Kalauch. A.}:
	Ideals and bands in pre-Riesz spaces. Positivity, {\bf 12}(4),  591--611(2008).
	\bibitem{4} {\it  Gao. N.,  Troitsky. V.G.,  Xanthos. F.}:
	{Uo-convergence and its applications to Cesàro means in Banach lattices}. Isr. J. Math. {\bf 220}, 649–-689 (2017).
	\bibitem{4b} {\it Gao.N.,  Xanthos. F.}:
	{Unbounded order convergence and application to martingales without probability}. J. Math. Anal. Appl. {\bf 415}, 931--947 (2014).
	\bibitem{2} {\it  Kalauch. A,  Malinowski. H.}: 
	Order continuous operators on pre-Riesz spaces and embeddings. Anwend. {\bf 38(4)}, 375--395 (2014).
	\bibitem{m} {\it Matin. M.,  Haghnejad Azar. K., Alavizadeh. R.}: Semi-order continuous operators on vector spaces. Bulletin of the Iranian Mathematical Society. (2021). DOI: https://doi.org/10.1007/s41980-020-00509-z
\bibitem{ru}	{\it Rudin, W.} 1973. \textit{Functional analysis}, Third Ed. McGraw-Hill. Inc. New York.
	\bibitem{5} {\it van Haandel, M.B.J.G.}:
	Completions in Riesz space Theory. Proefschrift (PhD thesis), Universiteit Nijmegen, The Netherlands, (1993).
	\bibitem{w} {\it  Wickstead. A.W.}: \textit{Weak and unbounded order convergence in Banach lattices}. J. Austral. Math. Soc. Ser. A. {\bf 24}, 312--319 (1997).
\end{thebibliography}
\end{document}